\theoremstyle{plain}
\newtheorem{thm}{Theorem}[section]
\newtheorem*{main}{Main Theorem}
\newtheorem*{thmclas}{Product property}
\newtheorem*{add}{Additivity property}
\newtheorem*{otopy_in}{Otopy invariance}
\newtheorem*{solution}{Solution property}
\newtheorem*{normal}{Normalization property}
\newtheorem{lem}[thm]{Lemma}%[section]
\newtheorem{prop}[thm]{Proposition}%[section]
\newtheorem{cor}[thm]{Corollary}%[section]  
\theoremstyle{definition}
\theoremstyle{remark}
\newtheorem{rem}[thm]{Remark}%[section]
\numberwithin{equation}{section}
\newcommand{\wt}{\widetilde}
\newcommand{\R}{\mathbb{R}}
\newcommand{\Z}{\mathbb{Z}}
\newcommand{\restrictionmap}[2]{{#1}\mathpunct\restriction\hbox{}_{#2}}
\providecommand{\abs}[1]{\left\lvert#1\right\rvert}
\DeclareMathOperator{\Loc}{Loc}
\DeclareMathOperator{\id}{Id}
\DeclareMathOperator{\cl}{cl}
\DeclareMathOperator{\I}{I}
\DeclareMathOperator{\Iso}{Iso}
\DeclareMathOperator{\rank}{rank}
\title[Degree product formula]
{Degree product formula in the case\\ of a finite group action} 
\author[P. Bart{\l}omiejczyk, B. Kamedulski and P. Nowak-Przygodzki]
{Piotr Bart{\l}omiejczyk, Bartosz Kamedulski 
and Piotr Nowak-Przygodzki}
\author{}
\address{Faculty of Applied Physics and Mathematics,
Gda{\'n}sk University of Technology,
Gabriela Narutowicza 11/12,
80-233 Gda{\'{n}}sk, Poland}
\email{piobartl@pg.edu.pl}  
\email{bartoszkamedulski@gmail.com}
\email{piotrnp@wp.pl}
\date{\today}
\subjclass[2010]{Primary: 55P91; Secondary: 54C35}
\keywords{Equivariant degree, Burnside ring, product property.}
\begin{document}

\begin{abstract}
Let $V, W$ be finite-dimensional orthogonal representations of 
a finite group $G$. The equivariant degree with values
in the Burnside ring of $G$ has been studied extensively by many authors. 
We present a short proof of the degree product formula 
for local equivariant maps on $V$ and $W$.
\end{abstract}

\maketitle

%%%%%%%%%%%%%%%%%%%%%%%%%%%%%   Sec. Intro  %%%%%%%%%%%%%%%%%%%%%%%%%%%%

\section*{Introduction}\label{sec:intro} 
One of the basic properties of the topological degree is 
the \emph{product property}. Recall that a continuous map 
from an open subset of $\R^n$ into $\R^n$ is called \emph{local} 
if its set of zeros is compact.
For such maps the classical Brouwer degree $\deg$ is well-defined
and the product property holds. Namely,
\begin{thmclas}[{\cite[Prop. 8.7]{Br}}]
Let $f\colon D_f\subset\R^m\to\R^m$ and $f'\colon D_{f'}\subset\R^n\to\R^n$
be local maps. Then $f\times f'\colon D_f\times D_{f'}\to\R^{m+n}$ 
is also a local map and
\[
\deg(f\times f')=\deg f\cdot\deg f'.
\]
\end{thmclas}

Our main goal is to present a short proof of an equivariant version
of the product formula for equivariant local maps
in the case of a~finite group action.
In that case the formula has an analogous form
\[
\deg_G(f\times f')=\deg_G f\cdot\deg_G f',
\]
but since the equivariant degree $\deg_G$ has its values in 
the \emph{Burnside ring} of a finite group $G$, the multiplication
on the right side of the formula takes place in this Burnside ring.
It is worth pointing out that in \cite{GKW} the authors proved
the equivariant product formula in much more general setting
i.e.\ in the case of a compact Lie group action. Unfortunately, 
this proof seems to be rather sketchy in some parts. We hope that
our proof has the advantage of being straightforward and complete
and can be seen as the first step towards proving the general case.

The paper is organized as follows. 
Section \ref{sec:basic} contains preliminaries. 
In Section \ref{sec:degree} we recall 
the concept of the equivariant degree $\deg_G$. 
Our main result is stated in Section \ref{sec:main}. 
In Section \ref{sec:poly} we introduce standard and polystandard maps
and study their properties needed in the next section. 
Finally, Section \ref{sec:proof} contains the proof of our main result.

%%%%%%%%%%%%%%%%%%%%%%%%%%%%%   Sec. Basic  %%%%%%%%%%%%%%%%%%%%%%%%%%%%%

\section{Basic definitions} 
\label{sec:basic}

\subsection{Local maps}
The notation $A\Subset B$ means 
that $A$ is a compact subset of $B$.
For a topological space $X$,
we denote by $\tau(X)$ the topology on $X$.
For any topological spaces $X$ and $Y$, 
let $\mathcal{M}(X,Y)$ be the set
of all continuous maps $f\colon D_f\to Y$ such that 
$D_f$ is an open subset of $X$. 
Let $\mathcal{R}$ be
a family of subsets of $Y$. We define 
\[
\Loc(X,Y,\mathcal{R}):=\{\,f\in\mathcal{M}(X,Y)\mid 
f^{-1}(R)\Subset D_f \text{ for all $R\in\mathcal{R}$}\,\}.
\]
We introduce a topology in $\Loc(X,Y,\mathcal{R})$
generated by the subbasis consisting of all sets of the form
\begin{itemize}
	\item $H(C,U):=\{\,f\in\Loc(X,Y,\mathcal{R})\mid 
	      C\subset D_f,\, f(C)\subset U\,\}$
	      for $C\Subset X$ and $U\in\tau(Y)$,
	\item $M(V,R):=\{\,f\in\Loc(X,Y,\mathcal{R})\mid 
	      f^{-1}(R)\subset V\,\}$ for $V\in\tau(X)$ and 
	      $R\in\mathcal{R}$.
\end{itemize}
Elements of $\Loc(X,Y,\mathcal{R})$ 
are called \emph{local maps}.
The natural base point of $\Loc(X,Y,\mathcal{R})$ 
is the empty map.
Let $\sqcup$ denote the union of two disjoint local maps.
Moreover, in the case when $\mathcal{R}=\{\{y\}\}$
we will write $\Loc(X,Y,y)$ omitting double curly brackets.
For more details we refer the reader to \cite{BP1}.

\subsection{Equivariant maps}
Assume that $V$ is a real finite dimensional orthogonal
representation of a finite group $G$.
Let $X$ be an arbitrary $G$-space.
We say that $f\colon X\to V$ is \emph{equivariant},
if $f(gx)=gf(x)$ for all $x\in X$ and $g\in G$.
We will denote by 
$
\mathcal{C}_G(X)
$
the space $\{f\in\Loc(X,V,0)\mid\text{$f$ is equivariant}\}$
with the induced topology.
Assume that $\Omega$ is an open invariant subset of $V$.
Elements of $\mathcal{C}_G(\Omega)$ are called
\emph{equivariant local maps}.

\subsection{Otopies}
Let $I=[0,1]$. 
We assume that the action
of $G$ on $I$ is trivial.
Any element of 
$\mathcal{C}_G(I\times\Omega)$
is called an \emph{otopy}.
Each otopy corresponds to a path in $\mathcal{C}_G(\Omega)$ 
and vice versa.
Given an otopy 
$h\colon\Lambda\subset I\times\Omega\to V$ 
we can define for each $t\in I$:
\begin{itemize}
	\item sets $\Lambda_t=\{x\in\Omega\mid(t,x)\in\Lambda\}$,
	\item maps $h_t\colon\Lambda_t\to V$ with $h_t(x)=h(t,x)$.
\end{itemize}
In this situation we say that 
$h_0$ and $h_1$ are \emph{otopic}.
Otopy gives an equivalence relation 
on $\mathcal{C}_G(\Omega)$.
The set of otopy classes will be denoted by 
$\mathcal{C}_G[\Omega]$.
\begin{rem}\label{rem:loc}
Observe that if $f\in\mathcal{C}_G(\Omega)$
and $U$ is an open invariant subset of $D_f$
such that $f^{-1}(0)\subset U$, then
$f$ and $\restrictionmap{f}{U}$ are otopic. In particular, if
$f^{-1}(0)=\emptyset$ then $f$ is otopic to the empty map.
\end{rem}

\subsection{\texorpdfstring{$G$}{G}-actions}
If $H$ is a subgroup of $G$ then
\begin{itemize}
\item $(H)$ stands for the conjugacy class of $H$,
\item $NH$ is the normalizer of $H$ in $G$,
\item $WH$ is the Weyl group of $H$ i.e. $WH=NH/H$.
\end{itemize}
Recall that $G_x=\{g\in G\mid gx=x\}$.
We define the following subsets of~$V$:
\begin{align*}
V^H &=\{x\in V\mid H\subset G_x\},\\
\Omega_H &=\{x\in\Omega\mid H=G_x\}.
\end{align*}%
The set $\Iso(\Omega):=\{(H)\mid
\text{$H$ is a closed subgroup of $G$ and $\Omega_{H}\neq\emptyset$}\}$
is partially ordered. Namely, $(H)\le(K)$ if $H$ is conjugate to a subgroup of $K$.
	
We will make use of
the following well-known facts:
\begin{itemize}
	\item $V^H$ is a linear subspace of $V$ and an orthogonal
	      representation of $WH$,
	\item $\Omega_H$ is open in $V^H$,	      
	\item the action of $WH$ on $\Omega_H$ is free.	
\end{itemize}

\subsection{Burnside ring} 
\label{sec:burnside}
Let $\mathcal{A}^+(G)$ be the set 
of isomorphism classes of finite $G$-sets. 
While disjoint union of finite $G$-sets 
induces addition on $\mathcal{A}^+(G)$, 
cartesian product with diagonal action 
induces multiplication, i.e.\
\[
[X]+[Y]=[X\sqcup Y],\quad[X]\cdot[Y]=[X\times Y],
\] 
where $[X]$, $[Y]$ are isomorphism classes of finite $G$-sets.
The resulting structure is a commutative semi-ring with identity.

Since every finite $G$-set is a disjoint union of its orbits, 
each element of the semi-ring can be presented uniquely 
as $\sum d_{(H)}[^{G}\!/\!_H],$ 
where each $d_{(H)}$ is a non-negative integer and $[^{G}\!/\!_H]$ 
is the isomorphism class of $^{G}\!/\!_H$, 
which depends only on the conjugacy class of $H$. 
The problem of decomposing ${^{G}\!/\!_H}\times{^{G}\!/\!_K}$ 
into orbits makes multiplication in $\mathcal{A}^+(G)$ non-trivial.

The Grothendieck ring constructed from $\mathcal{A}^+(G)$ 
is denoted by $\mathcal{A}(G)$ and 
called the \textit{Burnside ring} of $G$. 
Additively, it is a free abelian group generated by 
isomorphism classes $[^{G}\!/\!_H]$ of $^{G}\!/\!_H$. 
$\mathcal{A}(G)$ is a commutative ring 
with the unit $[^{G}\!/\!_G].$

\subsection{Local cross sections of a vector bundle} 
\label{sec:cross}
All manifolds considered are without boundary.
Assume $p\colon E\to M$ is a smooth (i.e., $C^1$) vector bundle. 
We will identify $M$ with the zero section of $E$.
A \emph{local cross section} of a bundle $p\colon E\to M$
is a continuous map $s\colon U\to E$, where $U$ is open in $M$,
$s^{-1}(M)$ is compact and $p\circ s=\id_U$. 
Let $\Gamma(M,E)$ denote 
the set of all local cross sections of $E$ over $M$.

Assume that $\rank E=\dim M$ and $E$ is orientable as a manifold.
Let us denote by $\I(s)$ the oriented intersection number
of a local cross section $s$ with the zero section
(see for instance \cite{GP}), which is an integer.

%%%%%%%%%%%%%%%%%%%%%%%%%%%%%   Sec. Degree  %%%%%%%%%%%%%%%%%%%%%%%%%%%%

\section{Degree \texorpdfstring{$\deg_G$}{deg G}}
\label{sec:degree}

In papers \cite{B1,BGI,GKW} the authors introduce
the equivariant degree $\deg_G: \mathcal{C}_G(V) \to \mathcal{A}(G)$
for the action of a compact Lie group $G$
and prove that the degree has the following expected properties.

\begin{add}
If $f,f'\in\mathcal{C}_G(V)$ and $D_f\cap D_{f'}=\emptyset$ then
\[
\deg_G (f\sqcup f')=\deg_G f+\deg_G f'.
\]
\end{add}

\begin{otopy_in}
Let $f,f'\in\mathcal{C}_G(V)$. If $f$ and $f'$ are otopic then 
\[
\deg_G f = \deg_G f'.
\]
\end{otopy_in}

\begin{solution}
If $\deg_G f \neq 0$ then $f(x)=0$ for some $x \in D_f$.
\end{solution}

In order to formulate the next property, it is necessary 
to introduce some notation and make some assumptions.
Let $B(p,r)$ denote the open $r$-ball in $V$ around $p$.
Assume that $G$ is finite, $x\in V$ and 
$f\colon\cup_{y\in Gx}B(y,\epsilon)\to V$,
where $\epsilon<\frac12\min\{\abs{a-b}\mid a,b\in Gx, a\neq b\}$.

\begin{normal}
If $f(y+v)=v$ for $y\in Gx$ and $\abs{v}<\epsilon$,
then $f\in\mathcal{C}_G(V)$, $f^{-1}(0)=Gx$ and $\deg_G f=[G/G_x]$.
\end{normal}

\begin{rem}
The equivariant degree, as an element of the Burnside ring, 
consists of multiple coefficients. 
Examination of these allows not only to find orbits of zeros, 
but also to analyze their orbit types.
\end{rem}

Recall here that the main goal of this paper is to show
that the degree $\deg_G$ has the \emph{product property} as well.

%%%%%%%%%%%%%%%%%%%%%%%%%%%%%  Sec. Main  %%%%%%%%%%%%%%%%%%%%%%%%%%%%%%%

\section{Main result} 
\label{sec:main}
Assume that
\begin{itemize}
	\item $G$ is a finite group,
	\item $V$ and $W$ are real finite dimensional orthogonal
  representations of $G$.
\end{itemize}
Recall that $\mathcal{C}_G(V)$ denotes 
the space of equivariant local maps in $V$.

\begin{main}
If $f\in\mathcal{C}_G(V)$ and $f'\in\mathcal{C}_G(W)$, 
then $f\times f'\in\mathcal{C}_G(V\oplus W)$ and
\[
\deg_G(f\times f')=\deg_G f\cdot\deg_G f',
\]
where ``$\cdot$'' denotes the multiplication 
in the Burnside ring $\mathcal{A}(G)$.
\end{main}

%%%%%%%%%%%%%%%%%%%%%%%%  Sec. Standard Maps %%%%%%%%%%%%%%%%%%%%%%%%%%%%%%

\section{Standard and polystandard maps}
\label{sec:poly}
In this section we introduce standard and polystandard maps
and study their basic properties. These maps will play the crucial role 
in the proof of Main Theorem in the next section. Let us start
with recalling the definition of an $\epsilon$-normal neighbourhood. 
Assume that:
\begin{itemize}
    \item $Y$ is a linear subspace of $\R^n$,
    \item $U$ is an open subset of $Y$.
    \end{itemize}
Let $Y^\bot$ denote the orthogonal complement of $Y$ in $\R^n$.
For $\epsilon>0$ let us denote by $U^\epsilon$ the set
$U^\epsilon=\{x+v\mid x\in U, v\in Y^\bot, \abs{v}<\epsilon\}$.
Any such set will be called an 
$\epsilon$-\emph{normal neighbourhood} of $U$.

Now we are ready to introduce two important classes of maps: 
standard and polystandard.
A map $f\in\mathcal{C}_G(V)$ is called \emph{standard} if
\begin{itemize}
    \item $f^{-1}(0)=Gx_0$ for some $x_0 \in D_f$,
    \item there is an open subset $U$ of $V_H$, where $H=G_{x_0}$, 
		and $\epsilon >0$ such that:
    \begin{itemize}
        \item $f^{-1}(0)\cap U=\{x_0\}$,
        \item $U^\epsilon\subset D_f$,
        \item $f(x+v)=f(x)+v$ for all 
				$x \in U, v \in (V^H)^\bot, |v|<\epsilon$.
    \end{itemize}
\end{itemize}
In such situation we also say that $f$ is standard with respect to $x_0$, 
$U$ and $\epsilon$.
A map $f \in \mathcal{C}_G(V)$ is called $m$-\emph{standard} 
if there are standard maps $f_i$ ($i=1,\dotsc,m$) 
with disjoint domains such that:
\[
f^{-1}(0)\subset\sqcup_{i=1}^m D_{f_i}\subset D_f.
\]
If a map is $m$-standard for some $m$, 
we call it \emph{polystandard}.
A finite disjoint union of standard maps is called 
\emph{strictly polystandard}. By Remark \ref{rem:loc},
any polystandard map is otopic to a strictly polystandard one.

In what follows, we will need the notation that
relates the classical topological and equivariant degrees.
Let $f$ be standard with respect to $x$, $U$ and $\epsilon$
and let $\alpha=Gx$. Consider a map $f_x\colon U\subset V_H\to V^H$ 
given by $f_x=\restrictionmap{f}{U}$. Let $d_x=\deg(f_x,U)$.

\begin{prop}\label{prop:deg}
For each $g\in G$ the equality $d_x=d_{gx}$ holds.
\end{prop}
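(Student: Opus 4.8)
The plan is to reduce the statement to the (obvious) fact that left translation by $g$ is a linear isometry of $V$ mapping $V^H$ to $V^{gHg^{-1}}$, and that this induces an equivalence between the local maps $f_x$ and $f_{gx}$ up to a linear change of coordinates. First I would set $K=gHg^{-1}=G_{gx}$ and observe that the translation map $L_g\colon V\to V$, $L_g(y)=gy$, restricts to a linear isomorphism $L_g\colon V^H\to V^K$ carrying $U\subset V^H$ onto $gU\subset V^K$ and sending $x_0$ (the point with $Gx_0=\alpha$, $G_{x_0}=H$) to $gx_0$ with $G_{gx_0}=K$. Hence $gU$ is an open subset of $V^K$ witnessing, together with $gx_0$ and the same $\epsilon$, that $f$ is standard with respect to $gx_0$: indeed equivariance of $f$ gives $f(gy)=gf(y)$, so for $y\in U$ and $v\in(V^H)^\bot$ with $|v|<\epsilon$ we get $f(gy+gv)=gf(y+v)=g(f(y)+v)=f(gy)+gv$, and $gv\in(V^K)^\bot$ because $L_g$ is an isometry preserving the orthogonal splitting. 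This shows $f_{gx}=\restrictionmap{f}{gU}$ is well-defined on the open set $gU\subset V^K=V^{G_{gx}}$.

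Next I would compare the two finite-dimensional degrees via the commuting square
\[
\begin{CD}
U @>{f_x}>> V^H\\
@V{L_g}VV @VV{L_g}V\\
gU @>{f_{gx}}>> V^K,
\end{CD}
\]
which commutes precisely by the equivariance computation above: $f_{gx}(gy)=f(gy)=gf(y)=L_g(f_x(y))$ for $y\in U$. Now $L_g\colon V^H\to V^K$ is an orientation-preserving linear isomorphism provided we orient $V^K$ as the image of a chosen orientation of $V^H$; and the multiplicativity/change-of-variables property of the Brouwer degree gives $\deg(f_{gx},gU)=\deg(L_g,\cdot)\cdot\deg(f_x,U)\cdot\deg(L_g^{-1},\cdot)=\pm\deg(f_x,U)$, with sign $+1$ as soon as the orientations on $V^H$ and $V^K$ are chosen compatibly, i.e. so that $L_g$ preserves them. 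Thus $d_{gx}=d_x$.

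The only genuinely subtle point is the bookkeeping of orientations: $d_x$ and $d_{gx}$ are defined using chosen orientations of the fixed-point spaces $V^H$ and $V^K=V^{gHg^{-1}}$, and the claim $d_x=d_{gx}$ only holds if these orientations are chosen coherently along the conjugacy class $(H)$. So I would be explicit that when defining standard maps one fixes, once and for all, an orientation of $V^H$ for one representative $H$ of each class $(H)$ and transports it to every conjugate $gHg^{-1}$ by $L_g$; the well-definedness of this transport (independence of the choice of $g$ with $gHg^{-1}=K$) follows because two such choices differ by an element of $NK$ acting on $V^K$, and — this is the fact I would invoke or check — that action is orientation-preserving. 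Granting this convention, the commuting square above makes $L_g$ orientation-preserving by construction, the degree is invariant under composition with orientation-preserving linear isomorphisms on source and target, and the equality $d_x=d_{gx}$ follows immediately. I expect the sign/orientation argument to be the main obstacle; everything else is a one-line consequence of equivariance.
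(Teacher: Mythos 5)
Your first two paragraphs are exactly the paper's argument: by equivariance, $f_{gx}=L_g\circ f_x\circ L_g^{-1}$ on $gU$, where $L_g\colon V^H\to V^K$ is a linear isomorphism, and invariance of the Brouwer degree under this linear change of coordinates gives $d_{gx}=d_x$. The paper states this in two lines; your check that $f$ is standard with respect to $gx_0$, $gU$ and the same $\epsilon$ is a harmless elaboration of the same route.

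The third paragraph, however --- the part you single out as ``the main obstacle'' --- is both unnecessary and, as written, based on a false claim. No orientations need to be chosen: $d_x=\deg(f_x,U)$ is the degree of a map from an open subset of $V^H$ into the \emph{same} space $V^H$, and such a degree does not depend on an orientation of $V^H$ (reversing it changes the sign in source and target simultaneously). For the same reason your composition formula already closes without any compatibility assumption: with respect to arbitrary fixed orientations of $V^H$ and $V^K$, $\deg(L_g,\cdot)$ and $\deg(L_g^{-1},\cdot)$ are the same sign (the determinants of a matrix and its inverse have equal sign), so their product is $+1$ and $\deg(f_{gx},gU)=\deg(f_x,U)$ unconditionally. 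Moreover, the fact you say you would invoke to make the orientation transport well defined --- that elements of $NK$ act on $V^K$ preserving orientation --- is false in general: take $G=\Z_2$ acting on $V=\R$ by the sign representation and $K=\{e\}$; then $NK=G$ acts on $V^K=\R$ by $x\mapsto-x$. Fortunately nothing depends on it: delete the orientation discussion and what remains is a complete proof, essentially identical to the paper's.
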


\begin{proof}
Let $g\in G$ and $K=G_{gx}$. Then $V^K=gV^H$.
Consider a map $f_{gx}\colon gU\subset V_K\to V^K$ satisfying 
$f_{gx}(y)=gf_x(g^{-1}y)$. Since $g\colon V^H \to V^K$ 
is an isomorphism of linear spaces, 
\[
d_{gx}=\deg(f_{gx}, gU)=\deg(f_x, U)=d_x.\qedhere
\]
\end{proof}
Define $d_\alpha=d_y$, where $y$ is any element of $\alpha$.
Proposition \ref{prop:deg} guarantees that the integer $d_\alpha$
is well-defined.
The main advantage of standard and polystandard maps is that
we can immediately compute their equivariant degree $\deg_G$
if we know the value of $d_\alpha$.
Namely, let $f$ be a standard map and $\alpha=Gx=f^{-1}(0)$. Then
\[
\deg_G f=d_\alpha [G/G_{x}]=d_\alpha[\alpha].
\]
More generally, let $f$ be a $m$-standard map, and let 
$\{\alpha_i\}_{i=1}^m$ denote the set of orbits of zeros of $f$. 
Then
\[
\deg_G f=\sum_{i=1}^m d_{\alpha_i}[\alpha_i].
\]

%%%%%%%%%%%%%%%%%%%%%%%%%%%%%  Sec. Proof  %%%%%%%%%%%%%%%%%%%%%%%%%%%%%%%

\section{Proof of Main Theorem}
\label{sec:proof}
 
To prove  Main Theorem we will need two lemmas.

\begin{lem}\label{lem:poly}
Let $f, f'$ be standard maps and 
$\alpha=f^{-1}(0)$, $\beta=(f')^{-1}(0).$ 
Then $f \times f'$ is polystandard and for each orbit 
$\gamma \subset \alpha \times \beta$ we have:
\[
d_\gamma=d_\alpha \cdot d_\beta.
\]
Moreover,
\[
\deg_G(f\times f')=\deg_G f\cdot\deg_G f'.
\]
\end{lem}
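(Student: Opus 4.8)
The plan is to analyze the zero set of $f \times f'$ and show each zero‑orbit sits inside a standard piece. Write $f$ standard with respect to $x_0$, $U$, $\epsilon$ and $f'$ standard with respect to $x_0'$, $U'$, $\epsilon'$, with $H = G_{x_0}$, $H' = G_{x_0'}$. Then $(f\times f')^{-1}(0) = \alpha\times\beta$, a finite union of $G$‑orbits. Pick a point $(x,x') \in \alpha\times\beta$, so $x\in Gx_0$, $x'\in Gx_0'$, and set $L = G_{(x,x')} = G_x\cap G_{x'}$. First I would identify a local model near the orbit $\gamma = G(x,x')$: near $x$ the map $f$ looks like $f_x$ on $V^{G_x}$ crossed with the identity on $(V^{G_x})^\bot$, and similarly for $f'$ near $x'$ on $W$; hence $f\times f'$ near $(x,x')$ decomposes, after restricting to the appropriate normal neighbourhood, as a product of $f_x$, $f'_{x'}$ and an identity map on the complementary normal directions. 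The key point is that $(V\oplus W)^L$ contains $V^{G_x}\oplus W^{G_{x'}}$, and on $U\times U' \subset V^{G_x}\oplus W^{G_{x'}}$ the map $f\times f'$ restricts to $f_x\times f'_{x'}$, while in the $L$‑normal directions it is the identity. This exhibits a standard map $(f\times f')_i$ with a single zero‑orbit $\gamma$, whose domain is a small invariant neighbourhood of $\gamma$.

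Next I would verify the polystandard structure: doing the above for each orbit $\gamma$ in $\alpha\times\beta$ gives finitely many standard maps with pairwise disjoint domains (shrink if necessary so the domains separate the distinct orbits), whose union of domains contains $(f\times f')^{-1}(0)$ and is contained in $D_f\times D_{f'} = D_{f\times f'}$. That is exactly the definition of $m$‑standard, hence $f\times f'$ is polystandard. For the degree computation of a single $\gamma$, I would use the classical product property of the Brouwer degree (the Product property quoted in the Introduction from \cite{Br}) applied to $(f\times f')_\gamma = f_x\times f'_{x'}$ on $U\times U'$: since the extra normal‑direction identity factor contributes degree $1$, we get $d_\gamma = \deg(f_x\times f'_{x'}, U\times U') = \deg(f_x,U)\cdot\deg(f'_{x'},U') = d_\alpha\cdot d_\beta$, using Proposition \ref{prop:deg} to pass to orbit‑indexed quantities.

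Finally, for the degree formula itself, I would combine the computed local degrees with the formula for the equivariant degree of a polystandard map recalled at the end of Section \ref{sec:poly}. Writing $\{\gamma_j\}$ for the orbits in $\alpha\times\beta$, we have $\deg_G(f\times f') = \sum_j d_{\gamma_j}[\gamma_j] = d_\alpha d_\beta \sum_j [\gamma_j] = d_\alpha d_\beta [\alpha\times\beta]$. Since $\alpha = G/G_{x_0}$ and $\beta = G/G_{x_0'}$ as $G$‑sets, the class $[\alpha\times\beta]$ equals $[\alpha]\cdot[\beta] = [G/G_{x_0}]\cdot[G/G_{x_0'}]$ in the Burnside ring, so $\deg_G(f\times f') = (d_\alpha[\alpha])\cdot(d_\beta[\beta]) = \deg_G f\cdot\deg_G f'$. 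The main obstacle I anticipate is the bookkeeping in the first step: checking carefully that $f\times f'$ genuinely has the standard form near each $\gamma$ — in particular that the isotropy $L$ of a product point is the intersection of the two isotropies, that $(V\oplus W)^L$ splits compatibly with $V^{G_x}\oplus W^{G_{x'}}$ and its normal complement, and that on that normal complement $f\times f'$ really is the identity so no hidden zeros appear. Everything else is a routine assembly of the quoted Brouwer product property and the polystandard degree formula.
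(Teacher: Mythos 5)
Your proposal is correct and follows essentially the same route as the paper: exhibit standard pieces of $f\times f'$ around each orbit of $\alpha\times\beta$ with isotropy $H\cap K$ (using that the product acts as the identity on the relevant normal directions), compute $d_\gamma=d_\alpha\cdot d_\beta$ via the classical Brouwer product property together with excision, and conclude with the polystandard degree formula and the identity $[\alpha\times\beta]=[\alpha]\cdot[\beta]$ in the Burnside ring. The bookkeeping you flag (isotropy of a product point, the inclusion $V^H\oplus W^K\subset(V\oplus W)^{H\cap K}$, and the identity behaviour on $((V\oplus W)^{H\cap K})^\bot$) is exactly what the paper's proof verifies explicitly.
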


\begin{proof}
First we show that $f\times f'$ is polystandard.
Assume that $f$ is standard with respect to $x_0$, $U$ and $\epsilon$
and $f'$ is standard with respect to $x'_0$, $U'$ and $\epsilon'$.
Let $H=G_{x_0}$ and $K=G_{x'_0}$. 
Note that $(f \times f')^{-1}(0)=Gx_0 \times Gx'_0$ 
is a finite union of orbits and $G_{(x_0,x'_0)}=H\cap K.$ 
Since $U^{\epsilon} \times {U'}^{\epsilon'}$ is open in $V\oplus W$ and 
\[
(x_0,x'_0)\in V_H\times W_K\subset (V \oplus W)_{H\cap K},
\] 
there exists an open subset 
$U''\subset (V \oplus W)_{H\cap K}$ and $\epsilon''>0$ 
such that $(f \times f')^{-1}(0) \cap U'' = \{(x_0, x'_0)\}$ and 
${U''}^{\epsilon''} \subset U^\epsilon \times {U'}^{\epsilon'}$. 

Now let us check that 
$(f\times f')(x''+w'')=(f\times f')(x'')+w''$
for $x''\in U''$, $w''\in ((V \oplus W)^{H\cap K})^\bot$, $\abs{w''}<\epsilon''$.
Note that since ${U''}\subset U^\epsilon\times {U'}^{\epsilon'}$,
$x''$ has the unique representation as $(x,x')+(v,v')$,
where $(x,x')\in U\times U'$ and $(v,v')\in (V^H)^\bot\oplus (W^K)^\bot$.
Moreover, since 
\[
((V \oplus W)^{H\cap K})^\bot\subset(V^H)^\bot\oplus(W^K)^\bot,
\]
$w''$ can be uniquely written as $(w,w')$ with 
$w \in (V^H)^\bot$ and $w' \in (W^K)^\bot$. Hence
\begin{multline*}
(f\times f')(x''+w'')=(f \times f')(x+v+w,x'+v'+w')\\
=\big(f(x+v+w),f'(x'+v'+w')\big)
=\big(f(x)+v+w, f'(x')+v'+w'\big)\\
=\big(f(x+v)+w, f'(x'+v')+w'\big)
=(f \times f')(x'')+w'',
\end{multline*}
which proves that $f\times f'$ is polystandard.

Next we show the formula $d_\gamma=d_\alpha \cdot d_\beta$.
Let $(x_0, x'_0)\in\gamma\subset\alpha\times\beta$. Observe that
\begin{align*}
d_\alpha&=d_{x_0}=\deg(f_{x_0},U)\stackrel{1}{=}\deg(f,U^\epsilon), \\
d_\beta&=d_{x'_0}=\deg(f'_{x'_0}, U')\stackrel{1}{=}\deg(f',{U'}^{\epsilon'}).
\end{align*}
Thus we get
\begin{multline*}
    d_\gamma=d_{(x_0,x'_0)}=\deg((f\times f')_{(x_0,x'_0)}, U'')
		\stackrel{1}{=}\deg(f \times f', {U''}^{\epsilon''})\\
    \stackrel{2}{=}\deg(f \times f', U^{\epsilon} \times {U'}^{\epsilon'}) 
		\stackrel{1}{=}\deg(f,U^\epsilon)\cdot\deg(f',{U'}^{\epsilon'})
		=d_\alpha\cdot d_\beta.
\end{multline*}
In the above we used two properties of the classical topological degree:
the  product formula ($1$) and the localization of zeros ($2$).

Finally, we prove the product formula for standard maps.
As we have shown $f \times f'$ is $m$-standard for some $m$.
Decompose $\alpha\times\beta$ into the disjoint union of orbits 
$\bigsqcup_{i=1}^m\gamma_i$. We have 
$d_{\gamma_i}=d_\alpha \cdot d_\beta$ for each $i=1,2,\dotsc,m$.
Thus we get
\begin{multline*}
\deg_G(f\times f')=\sum_{i=1}^m d_{\gamma_i} [\gamma_i]=\sum_{i=1}^m d_\alpha d_\beta [\gamma_i]
=d_\alpha d_\beta\sum_{i=1}^m  [\gamma_i]\\
=d_\alpha d_\beta [\alpha \times \beta]
=d_\alpha [\alpha] \cdot d_\beta[\beta]
=\deg_G f \cdot \deg_G f',
\end{multline*}
which establishes the desired formula.
\end{proof}

We precede the next lemma by recalling the following notation.
Orbit types are indexed (according to the partial order) 
by natural numbers $i=1,2,\dotsc,m$. In particular, $H_1=G$.
Write $M_i=V_{H_i}/WH_i$ and $E_i=\left(V_{H_i}\times V^{H_i}\right)/WH_i$.
Recall that $p_i\colon E_i\to M_i$ is a vector bundle
such that $\rank E_i=\dim M_i$ and $E_i$ is orientable as a manifold.
Moreover, the bundle $E_i\to M_i$ is naturally isomorphic
to the tangent bundle $TM_i\to M_i$. We will denote by $\Gamma(M_i,E_i)$
the set of local cross sections of the bundle $p_i$ and by 
$\mathcal{B}_i\colon\mathcal{C}_{WH_i}\left(V_{H_i}\right)\to\Gamma(M_i,E_i)$
the function defined by the formula $\mathcal{B}_i(f)([x])=[(x,f(x)]$, 
where $x\in V_{H_i}$. Since $WH_i$ acts freely on $V_{H_i}$,
the function $\mathcal{B}_i$ is a bijection (see \cite[Thm 4.1]{B1}).
Let $\{M_{ij}\}_j$ denote 
the set of connected components of the manifold $M_i$ 
and $n(i)$ denote the number of these components,
which is finite or countable.
The formulation of the next lemma requires also the function
\[
\Phi\colon\mathcal{C}_{G}(V)\to
\prod_{i=1}^{m}\bigg(\sum_{j=1}^{n(i)}\mathbb{Z}\bigg)
\] 
defined in \cite[Thm 5.2]{B1}.
The function $\Phi$ has the following properties
\begin{itemize}
	\item $\Phi(\mathcal{C}_G(V))=\begin{cases}
  \prod_{i=1}^{m}\left(\sum_{j=1}^{n(i)}\mathbb{Z}\right) & \text{if $\dim V^G>0$},\\
  \{0,1\}\times\prod_{i=2}^{m}\left(\sum_{j=1}^{n(i)}\mathbb{Z}\right)& \text{if $\dim V^G=0$},
  \end{cases}$
	\item the function induced by $\Phi$ on $\mathcal{C}_G[V]$, which will be denoted
	by the same letter, is an injection.
\end{itemize}

\begin{lem}\label{lem:realization}
For any system $\{c_{ij}\}\in\Phi(\mathcal{C}_G(V))$ 
there is a strictly polystandard map $f$ such that $\Phi(f)=\{c_{ij}\}$.
\end{lem}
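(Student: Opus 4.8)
The plan is to build the strictly polystandard map $f$ orbit type by orbit type, starting from the top of the partial order ($H_1 = G$) and working downwards, realizing the prescribed coefficients $\{c_{ij}\}$ one connected component at a time by taking a disjoint union of suitably placed standard maps. For a fixed orbit type $i$ and a fixed connected component $M_{ij}$ of $M_i = V_{H_i}/WH_i$, the number $c_{ij}$ should be realized as the oriented intersection number $d_\alpha$ of a single standard map whose orbit of zeros $\alpha$ has type $(H_i)$ and projects into $M_{ij}$. Concretely, pick a point $x_0 \in V_{H_i}$ whose image $[x_0]$ lies in $M_{ij}$, and pick a small open slice $U \subset V_{H_i}$ around $x_0$ on which $WH_i$ acts freely with $U \cap Gx_0 = \{x_0\}$ and such that the $\epsilon$-normal neighbourhood $U^\epsilon$ embeds equivariantly into $V$. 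On $U$ we prescribe a map $U \to V^{H_i}$ with an isolated zero at $x_0$ of local Brouwer degree exactly $c_{ij}$ — this is where the normalization-type model comes in, and it is the familiar fact that any integer is the local degree of some smooth self-map of a Euclidean ball — and we extend it to $U^\epsilon$ by the standard formula $f(x+v) = f(x) + v$, then transport equivariantly over the orbit. This produces a standard map $f_{ij}$ with $d_{f_{ij}^{-1}(0)} = c_{ij}$.

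Next I would take the disjoint union $f = \bigsqcup_{i,j} f_{ij}$ over all $i$ and all components $j$ with $c_{ij} \neq 0$; this is by definition a strictly polystandard map, provided only finitely many $c_{ij}$ are nonzero, which holds because $\{c_{ij}\}$ lies in $\prod_{i=1}^m \bigl(\sum_{j=1}^{n(i)} \mathbb{Z}\bigr)$ and the direct sum forces finite support in the $j$ variable for each $i$ (and $m$ is finite). One must be slightly careful that the domains can be chosen genuinely disjoint in $V$: since the orbits of zeros have pairwise distinct locations (different components, or different orbit types, give disjoint orbits), one shrinks the $\epsilon$-normal neighbourhoods enough that the $U^\epsilon$'s are pairwise disjoint — a routine compactness argument. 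Then by the degree formula for polystandard maps recalled at the end of Section~\ref{sec:poly}, $\deg_G f = \sum_{i,j} c_{ij}[\alpha_{ij}]$, and by construction $\Phi(f)$ reads off precisely the coefficients $c_{ij}$ on each component $M_{ij}$.

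The one place that needs genuine care is the boundary case $\dim V^G = 0$, where $\Phi(\mathcal{C}_G(V))$ has first factor $\{0,1\}$ rather than all of $\mathbb{Z}$: here $M_1 = V_{G}/WG$ is a single point (a zero-dimensional manifold, $n(1)=1$), so $c_{11} \in \{0,1\}$, and the coefficient $1$ is realized by the trivial standard map at the origin $0 \in V^G$ with empty normal directions, i.e. essentially the identity near $0$, which has $d = 1$; the coefficient $0$ is realized by simply omitting this piece. When $\dim V^G > 0$ the top component behaves like all the others and no special treatment is needed. I expect the main obstacle to be purely bookkeeping: verifying that the locally constructed standard pieces can be assembled with pairwise disjoint domains inside $V$ while keeping each $U^\epsilon$ small enough to embed, and checking that $\Phi$ — defined componentwise as a sum of local intersection numbers over each $M_{ij}$ — evaluates on this disjoint union to exactly the prescribed system. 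Both are straightforward given the bijection $\mathcal{B}_i$ and the explicit description of $\Phi$ from \cite[Thm 5.2]{B1}, but they are the steps where one actually has to write something down rather than invoke a black box.
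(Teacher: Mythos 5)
Your overall strategy --- realize the coefficients $\{c_{ij}\}$ component by component via standard maps, take a disjoint union over the finitely many nonzero coefficients (finite support follows from the direct sum in $j$ and the finiteness of $m$, as you say), shrink the normal neighbourhoods to get genuinely disjoint domains, and treat $\dim V^G=0$ separately by adjoining the identity on a small ball around the origin --- is the same as the paper's. However, there is a genuine gap in the step where you realize $c_{ij}$ by a \emph{single} standard map with one orbit of zeros: you invoke ``the familiar fact that any integer is the local degree of some smooth self-map of a Euclidean ball''. That fact holds only in dimension at least two. The relevant dimension here is $\dim V^{H_i}$, since $d_\alpha$ is the Brouwer degree of $f_{x_0}\colon U\subset V_{H_i}\to V^{H_i}$, and one-dimensional strata do occur: for instance $G=\Z_2$ acting on $V=\R$ by sign (where $M_2\cong(0,\infty)$ is one-dimensional and $c_{21}$ may be any integer), or the reflection-axis strata of dihedral actions on $\R^2$. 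In dimension one the local index of an isolated zero can only be $-1$, $0$ or $1$, so for $\abs{c_{ij}}\ge 2$ no single standard map on such a component satisfies $d_\alpha=c_{ij}$, and your construction cannot produce the required $f$ there.

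The repair is exactly what the paper does: instead of one zero of index $c_{ij}$ on $M_{ij}$, place $\abs{c_{ij}}$ zeros with disjoint disc neighbourhoods, each carrying index $\sgn(c_{ij})=\pm1$; this is Corollary \ref{cor:index}, whose proof needs only a linear map $A$ with $\det A=\pm1$ and therefore works in every positive dimension. With this modification the rest of your argument (disjointness of the $\epsilon$-normal neighbourhoods, equivariant extension by $f(x+v)=f(x)+v$, the case $c_{11}\in\{0,1\}$ when $\dim V^G=0$ --- where, to be precise, the normal directions at $0$ are all of $V$, so the piece realizing $c_{11}=1$ is $\restrictionmap{\id}{B(0,\delta)}$ rather than a map with ``empty normal directions'' --- and reading off $\Phi(f)=\{c_{ij}\}$) goes through and essentially coincides with the paper's proof; the only cosmetic difference is that the paper transports the data through the bijection $\mathcal{B}_i$ and builds local cross sections of $E_i\to M_i$, whereas you work directly in $V_{H_i}$, which is equivalent since $\mathcal{B}_i$ is a bijection.
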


\begin{proof}
We need to consider two cases.

\noindent\textsc{Case 1:} $\dim V^G>0$. Under that assumption we have
$\dim M_i>0$ for each $i=1,2,\dotsc,m$. 
Fix $\{c_{ij}\}\in\Phi(\mathcal{C}_G(V))$, where $c_{ij}\in\Z$. 
On the component $M_{ij}$ choose $\abs{c_{ij}}$ points
together with their disjoint disc neighbourhoods. 
Let us denoteby $P_{ij}$ the set of these points 
and by $F_{ij}$ the union of their neighbourhoods. 
By Corollary \ref{cor:index} from Appendix~\ref{sec:dodatekA},
there is a local cross section 
$s_{ij}\colon F_{ij}\subset M_{ij}\to E_i$ such that
\[
	s_{ij}^{-1}(M_{ij})=P_{ij}\quad\text{and}\quad I(s_{ij})= c_{ij}.
\]
Next we define a local cross section 
$s_{i}\colon F_i\subset M_{i}\to E_i$
as a disjoint union $s_i=\sqcup_{j=1}^{n(i)}s_{ij}$. 
Note that the set $\cup_{i,j}P_{ij}$ is finite,
because only a finite number of $c_{ij}$ are nonzero.
Set $f_i=\mathcal{B}_i^{-1}(s_i)$. 
By the definition of $\mathcal{B}_i$, 
$f_i\in\mathcal{C}_{WH_i}\left(V_{H_i}\right)$
and by the construction of $s_i$, 
the domain $D_{f_i}$ is the union of disjoint discs
around all points in $f_i^{-1}(0)$. Consequently,
the set $D:=\cup_{i=1}^{m}D_{f_i}$ is a finite
disjoint union of discs $R_k$ such that every disc
contains one zero of a given $f_i$. Thus $D=\sqcup_k R_k$.
Observe that there is $\epsilon>0$ such that the sets
$R_k^\epsilon$ are pairwise disjoint. 
Since any point of $\sqcup_k R_k^\epsilon$
can be uniquely represented in the form $gx+gv$,
where $x\in D_{f_i}$, $v\in(V^{H_i})^\bot$, 
$\abs{v}<\epsilon$, let us define 
$f\colon D_f:=\sqcup_{k}R_k^\epsilon\to V$~by 
\[
f(gx+gv)=gf_i(x)+gv
\]
for all $g\in G$, $x\in D_{f_i}$, $v\in(V^{H_i})^\bot$, 
$\abs{v}<\epsilon$.
Our construction guarantees that
\begin{itemize}
	\item $f\in\mathcal{C}_G(V)$,
	\item $f$ is strictly polystandard,
	\item $\Phi(f)=\{c_{ij}\}$.
\end{itemize}

\noindent\textsc{Case 2:} $\dim V^G=0$. In that situation, $M_1=\{0\}$ and
$\dim M_i>0$ for $i>1$. Analogously as in the previous case, we define
$f\colon D_f\to V$, but now in the construction of $f$ we take into
account $M_i$ only for $i>1$. By choosing the disc neighbourhoods
small enough, we can guarantee that $0\not\in\cl(D_f)$.
Hence there is $\delta>0$ such that 
$B(0,\delta)\cap D_f=\emptyset$, where $B(0,\delta)$ denotes 
the open $\delta$-ball in $V$ around the origin. Set
\[
\wt{f}=\begin{cases}
  f & \text{if $c_{11}=0$},\\
  f\sqcup\restrictionmap{\id}{B(0,\delta)}& \text{if $c_{11}=1$}.
  \end{cases}
\]
It is easy to see that $\Phi\big(\,\wt{f}\,\big)=\{c_{ij}\}$.
\end{proof}

\begin{cor}\label{cor:otop}
In each otopy class in $\mathcal{C}_G(V)$ 
there is a strictly polystandard map.
\end{cor}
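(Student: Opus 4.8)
The plan is to read the corollary off from Lemma \ref{lem:realization} together with the injectivity of $\Phi$ on otopy classes. Fix an otopy class in $\mathcal{C}_G[V]$ and choose any representative $h\in\mathcal{C}_G(V)$. Since $h\in\mathcal{C}_G(V)$, the system $\Phi(h)$ belongs to $\Phi(\mathcal{C}_G(V))$, so Lemma \ref{lem:realization}, applied with $\{c_{ij}\}=\Phi(h)$, produces a strictly polystandard map $f$ with $\Phi(f)=\Phi(h)$.

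It then remains to check that $f$ lies in the otopy class of $h$. For this I would invoke the second property of $\Phi$ recorded just before Lemma \ref{lem:realization}: the map induced by $\Phi$ on $\mathcal{C}_G[V]$ is injective. From $\Phi(f)=\Phi(h)$ it follows that $[f]=[h]$ in $\mathcal{C}_G[V]$, i.e.\ $f$ and $h$ are otopic. Since $f$ is strictly polystandard, this is exactly the assertion of the corollary.

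I do not anticipate any real obstacle here, since all the work has already been carried out: Lemma \ref{lem:realization} gives an explicit strictly polystandard realization of a prescribed system of integers, and the needed properties of $\Phi$ are imported from \cite[Thm 5.2]{B1}. The only point that needs a moment's attention is the membership $\Phi(h)\in\Phi(\mathcal{C}_G(V))$, which is what licenses the application of the realization lemma; but this is immediate from $h\in\mathcal{C}_G(V)$. One may also note in passing that a strictly polystandard map, being a finite disjoint union of standard maps, is indeed an element of $\mathcal{C}_G(V)$, so the statement makes sense as phrased.
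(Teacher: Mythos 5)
Your argument is correct and coincides with the paper's own proof: both apply Lemma \ref{lem:realization} to the system $\Phi(h)$ to obtain a strictly polystandard map with the same $\Phi$-value, and then conclude via the injectivity of $\Phi$ on $\mathcal{C}_G[V]$ that the two maps are otopic. Nothing further is needed.
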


\begin{proof}
Recall that $\Phi\colon\mathcal{C}_G[V]\to\prod\big(\sum \mathbb{Z}\big)$ 
is an injection. 
Let $[f] \in \mathcal{C}_G[V].$ By Lemma \ref{lem:realization},
there is a strictly polystandard map $f' \in \mathcal{C}_G(V)$ 
such that $\Phi([f'])=\Phi([f]).$ 
From the injectivity of $\Phi$, $f' \in [f]$.
\end{proof}

It occurs that Main Theorem is now a consequence 
of Lemma \ref{lem:poly} and Corollary \ref{cor:otop}.
\begin{proof}[Proof of Main Theorem]
The fact that $f\times f'\in\mathcal{C}_G(V\oplus W)$ is obvious.
By Corollary \ref{cor:otop}, $f$ and $f'$ are otopic 
to strictly polystandard maps $\sqcup_k f_k$ and $\sqcup_l f'_l$ 
respectively, where $f_k$ and $f'_l$ are standard. 
In consequence, $f\times f'$ is otopic to 
$\sqcup_k f_k\times\sqcup_l f'_l=\sqcup_{k,l}(f_k\times f'_l)$. Hence
\begin{multline*}
\deg_G(f\times f')\stackrel{1}{=}\deg_G\sqcup_{k,l}(f_k\times f'_l)
\stackrel{2}{=}\sum_{k,l}\deg_G(f_k\times f'_l)\\
\stackrel{3}{=}\sum_{k,l}\deg_G f_k\cdot\deg_G f'_l=
\Big(\sum_{k}\deg_G f_k\Big)\cdot\Big(\sum_{l}\deg_G f'_l\Big)\\
\stackrel{2}{=}
\Big(\deg_G\sqcup_{k} f_k\Big)\cdot\Big(\deg_G\sqcup_{l} f'_l\Big)
\stackrel{1}{=}\deg_G f\cdot\deg_G f'
\end{multline*}
from the otopy invariance property ($1$),
the additivity property ($2$)
and Lemma \ref{lem:poly} ($3$). This completes the proof.
\end{proof}

\begin{rem}
Apart from the equivariant degree $\deg_G$, 
the equivariant gradient degree $\deg_G^\nabla$
with values in the Euler-tom Dieck ring $\mathcal{U}(G)$ 
is also considered, studied and applied
(see for example \cite{BGI,BP2,GR,R}). 
In many situations $\deg_G^\nabla$ 
gives more information than $\deg_G$.
However, for a finite group the Burnside ring $\mathcal{A}(G)$ 
and the Euler-tom Dieck ring $\mathcal{U}(G)$ are identical. 
Moreover, $\deg_G=\deg_G^\nabla$ if we restrict ourselves 
to equivariant gradient local maps.
In consequence, in the case of a finite group action
the product formula holds also for $\deg_G^\nabla$.
\end{rem}

%XXXXXXXXXXXXXXXXXXXXXXXXXXXXXXX   Appendix   XXXXXXXXXXXXXXXXXXXXXXXXXXXXXXXXXXX    
\appendix
\section{} 
\label{sec:dodatekA}

Assume that $p\colon E\to M$ is 
a vector bundle over a manifold $M$ 
such that $\dim M>0$, $\rank E = \dim M$ 
and $E$ is orientable as a manifold.

\begin{lem}
For any $q \in M$, any disc neighbourhood $D$ 
of $q$ and any $\alpha\in\{-1,1\}$ 
there is a local cross section $s\colon D\subset M \to E$ 
such that $s^{-1}(M)=\{q\}$ and $I(s)=\alpha$.
\end{lem}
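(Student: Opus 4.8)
The plan is to work locally in a trivialization of the bundle $p\colon E\to M$. Since $E$ is orientable as a manifold and $\rank E=\dim M=:d$, near the point $q$ we may pick a chart $\varphi\colon D\to \R^d$ with $\varphi(q)=0$ (shrinking $D$ if necessary so that it is a disc neighbourhood) together with a local trivialization $\Psi\colon p^{-1}(D)\to D\times\R^d$ that is orientation-preserving or -reversing in a controlled way. Under these identifications a local cross section over $D$ is just a continuous map $\sigma\colon\varphi(D)\subset\R^d\to\R^d$, its zero set corresponds to $s^{-1}(M)$, and the intersection number $\I(s)$ corresponds (up to a fixed sign depending on the chosen orientations) to the Brouwer degree of $\sigma$ on a small ball around $0$, i.e.\ to the local index of $\sigma$ at its isolated zero.

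First I would choose the model map on $\R^d$: take $\sigma_+(x)=x$ and $\sigma_-(x)=(−x_1,x_2,\dots,x_d)$, or more simply a linear isomorphism $A$ with $\det A>0$ in one case and $\det A<0$ in the other. Each $\sigma_\pm$ has $0$ as its only zero, and its local index there is $\pm1$. Restricting to a small closed ball contained in $\varphi(D)$ and transporting back through $\Psi$ and $\varphi$ yields a local cross section $s\colon D\subset M\to E$ with $s^{-1}(M)=\{q\}$ and $\I(s)=\pm1$. Because the sign of $\I(s)$ is $\sgn(\det A)$ times a fixed constant coming from the orientations, by allowing $\det A$ to have either sign we realize both values; composing with a further reflection in $\R^d$ if the fixed constant turns out to be $-1$ shows that every $\alpha\in\{-1,1\}$ is attained. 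One should also note that $s$ extends from the small ball to all of $D$ as a local cross section simply because the definition of local cross section only requires $s^{-1}(M)$ to be compact, not that $s$ be defined on all of $M$; so keeping the domain equal to $D$ (or to the small disc, which is again a disc neighbourhood of $q$) is harmless.

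The main obstacle is bookkeeping the orientation conventions: one must make sure the identification of $\I(s)$ with a Brouwer degree is done with the correct sign, and that the orientation of the total space $E$ (as a manifold) is compatible with the product orientation on $D\times\R^d$ coming from the chart and the trivialization. This is exactly the kind of point where a sign can silently flip, but it is not a real difficulty: since we are free to choose $A$ with either sign of determinant, any global sign ambiguity is absorbed, and the statement — which only asks for the existence of $s$ with $\I(s)=\alpha$ for each $\alpha\in\{-1,1\}$ — follows regardless of which convention is in force. I would therefore phrase the argument so that the orientation constant never needs to be computed explicitly: produce one cross section with $\I(s)=+1$ and one with $\I(s)=-1$ by using linear models of opposite determinant sign, and invoke the fact that $\I$ of a disjoint (here, single-point) cross section is the sum of local indices, each equal to $\sgn\det$ of the corresponding linear model up to the common constant.
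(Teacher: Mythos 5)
Your proposal is correct and takes essentially the same route as the paper: both transport the linear section $x\mapsto(x,Ax)$ with $\det A=\pm1$ from the standard ball into $E$ over $D$, the only difference being that the paper uses the identification $E\vert_D\cong TD$ and the tangent map of a diffeomorphism $B\to D$ (which gives $\I(s)=\I(s_A)$ on the nose), whereas you use an arbitrary trivialization and absorb the resulting orientation constant by choosing the sign of $\det A$. Note only that the linear model is defined on all of $\varphi(D)$, so no restriction to a smaller ball is needed and the domain can be taken to be $D$ itself, as the lemma requires.
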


\begin{proof}
Let $B=\{x \in\R^n\mid\abs{x}<1\}$ and $TB=B \times \mathbb{R}^n$. 
Consider a local cross section $s_A\colon B\to TB$ given by 
$s_A(x)=(x, Ax)$, where $A$ is linear and $\det A=\alpha$. 
Observe that $I(s_A)=\alpha$. 

Let us note that we can identify $TD$ with $E\mid_D$. 
Take a diffeomorphism $\varphi: B \to D$ such that $\varphi(0)=q$. 
It induces a tangent map $T\varphi\colon TB \to TD$. 
Finally, define a local cross section $s\colon D\to E$ by formula 
$s(x)=T\varphi(s_A(\varphi^{-1}(x)))$. 
It is easy to see that $s^{-1}(M)=q$ and $I(s)=I(s_A)=\alpha$.
\end{proof}

An immediate consequence of the above lemma 
is the following result.

\begin{cor}\label{cor:index}
Let $l\in\Z\setminus\{0\}$. For any set of distinct points
$Q=\{q_1, q_2, \ldots, q_{\abs{l}}\} \subset M$ 
and any set of disjoint disc neighbourhoods of these points
there is a local cross section $s$ defined on the union 
of these neighbourhoods such that $s^{-1}(M)=Q$ and $I(s)=l$.
\end{cor}

\end{document}